\newtheorem{theorem}{Theorem}[section]
\newtheorem*{theorem*}{Theorem}
\newtheorem{proposition}[theorem]{Proposition}
\newtheorem{corollary}[theorem]{Corollary}
\theoremstyle{definition}
\theoremstyle{remark}
\newtheorem{example}[theorem]{Example}
\newcommand{\R}{\mathbb{R}}
\newcommand{\rr}{\mathbb{R}}
\newcommand{\zz}{\mathbb{Z}}
\newcommand{\conv}{\textrm{conv}}
\title{Improved Tverberg theorems for certain families of polytopes}
\author{Pablo Sober\'on}\thanks{P. Sober\'on: Department of Mathematics, Baruch College and The Graduate Center, City University of New York, USA.  \url{psoberon@gc.cuny.edu}.  The research of P. Sober\'on was supported by NSF CAREER award no. 2237324, NSF award no. 2054419 and a PSC-CUNY Trad B award.} 
\author{Shira Zerbib}\thanks{S. Zerbib: Department of Mathematics, Iowa State University, USA.  \url{zerbib@iastate.edu}. The research of S. Zerbib was supported by NSF CAREER award no. 2336239,
NSF award no. DMS-1953929,
and Simons Foundation award no. MP-TSM-00002629.}
\begin{document}

\begin{abstract}
A theorem of Gr\"unbaum, which states that every $m$-polytope is a refinement of an $m$-simplex, implies the following generalization of Tverberg's theorem: if $f$ is a linear function from an $m$-dimensional polytope $P$ to $\R^d$
and $m \ge (d + 1)(r - 1)$, then there are $r$ pairwise disjoint faces of $P$
whose images intersect. Moreover, the topological Tverberg theorem implies  that this statement is true whenever the map $f$ is continuous and $r$ is a prime power. In this note we show that for certain families of polytopes the lower bound on the dimension $m$ of the polytopes can be significantly improved, both in the affine and topological cases. 
\end{abstract}

\subjclass{52A37, 55M20}

\keywords{Tverberg's theorem, Polytopes}

\maketitle

\section{Introduction}

Tverberg's theorem is a fundamental result in discrete geometry.  It gives combinatorial information about the overlaps of linear maps from high-dimensional simplices to low-dimensional real spaces.  There are now a myriad extensions and modifications of this result (see, e.g. \cites{Blagojevic:2017bl, DeLoera:2019jb, Barany:2018fy, Barany2022}).

\begin{theorem}[Tverberg 1966 \cite{Tverberg:1966tb}]
    Let $r,d$ be positive integers, and  $m=(r-1)(d+1)$. Let $\Delta^m$ be the $m$-dimensional simplex.  Then, for any linear map $f:\Delta^m \to \rr^d$, there exist $r$ points $x_1, \dots, x_r$ in pairwise disjoint faces such that $f(x_1) = \dots = f(x_r).$
\end{theorem}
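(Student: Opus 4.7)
The plan is to prove Tverberg's theorem via Sarkaria's tensor trick, which reduces the statement to the Colorful Carathéodory theorem. First I would restate the problem purely in terms of points: since a linear map $f \colon \Delta^m \to \R^d$ is determined by its values $p_i = f(e_i)$ on the $m+1$ vertices of the simplex, and since faces of the simplex correspond to vertex subsets, the conclusion is equivalent to asserting that the $N := m+1 = (r-1)(d+1) + 1$ points $p_1,\dots,p_N \in \R^d$ can be partitioned into $r$ classes $A_1,\dots,A_r$ whose convex hulls share a common point.

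Next, I would lift to homogeneous coordinates by setting $\hat p_i := (p_i, 1) \in \R^{d+1}$, so that affine relations become linear relations with a prescribed total mass. Then I would choose vectors $v_1,\dots,v_r \in \R^{r-1}$ with $v_1 + \cdots + v_r = 0$ and any $r-1$ of them linearly independent (for instance, the vertices of a regular simplex centered at the origin). For each $i$ I would form the color class $C_i := \{\hat p_i \otimes v_j : 1 \le j \le r\} \subset \R^{(d+1)(r-1)} = \R^m$. Because $\sum_j v_j = 0$, the centroid of $C_i$ is $\hat p_i \otimes 0 = 0$, so $0 \in \conv(C_i)$.

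The heart of the argument is to invoke the Colorful Carathéodory theorem in $\R^m$: given $m+1$ color classes each containing the origin in its convex hull, there is a rainbow transversal $\{\hat p_i \otimes v_{\sigma(i)} : 1\le i \le N\}$ whose convex hull still contains $0$. Writing $0 = \sum_i \alpha_i \, \hat p_i \otimes v_{\sigma(i)}$ with $\alpha_i \ge 0$ and $\sum_i \alpha_i = 1$, I would group the sum according to the value $j = \sigma(i)$ and use the linear independence of $v_1,\dots,v_{r-1}$ (together with $v_r = -\sum_{j<r} v_j$) to force the vectors $y_j := \sum_{\sigma(i)=j} \alpha_i \hat p_i \in \R^{d+1}$ to coincide. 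Reading off the last coordinate of $\hat p_i$ (which equals $1$) would then show that each class $A_j := \sigma^{-1}(j)$ carries total weight $1/r$, and rescaling yields a common point in $\conv\{p_i : i \in A_j\}$ for every $j$, producing the desired Tverberg partition.

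The main obstacle I anticipate is the tensor-product bookkeeping: the vectors $v_j$ must be chosen so that the \emph{only} linear relation among them, up to scaling, is $\sum v_j = 0$, as otherwise the equality $y_1 = \cdots = y_r$ fails; and one must verify that the ambient dimension matches $m$ exactly, which is what pins the hypothesis $m = (r-1)(d+1)$. Granted the Colorful Carathéodory theorem as a black box, the remaining work is a routine normalization to extract the partition and the common image point.
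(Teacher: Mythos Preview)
Your proposal is correct: this is Sarkaria's tensor trick (as streamlined by B\'ar\'any and Onn), and the bookkeeping you outline goes through without difficulty. The dimension count $(d+1)(r-1)=m$ matches exactly the number of color classes minus one, the centroid argument places $0$ in each $\conv(C_i)$, and the linear independence of $v_1,\dots,v_{r-1}$ in $\R^{r-1}$ makes $\{e_k\otimes v_j\}$ a basis of $\R^{(d+1)(r-1)}$, forcing $y_1=\dots=y_r$ as you claim.

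There is nothing to compare against, however: the paper does not prove this statement. Tverberg's theorem is quoted here as a classical result with a citation to the 1966 paper, and it serves only as background for the polytope generalizations that follow. So your write-up is a valid self-contained proof, but it is supplying something the paper deliberately omits rather than reproducing or diverging from an argument in the text.
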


The topological versions of Tverberg's theorem, in which the map $f$ is only required to be continuous instead of linear are of particular interest.  Establishing Tverberg's theorem for continuous maps has motivated significant developments in topological combinatorics.  The topological version of Tverberg's theorem holds when $r$ is a prime power \cite{Volovikov:1996up}, and the condition is necessary \cites{Frick:2015wp, Mabillard.2015, Avvakumov2023}. 

By a theorem of Gr\"unbaum \cite{grunbaum:book}, every $m$-dimensional polytope is a refinement of $\Delta^m$. Therefore, the following generalization of of Tverberg's theorem holds:  

\begin{theorem}\label{gentverberg}
      Let $d,r$ be positive integers and $P$ be a polytope of dimension at least $(d+1)(r-1)$.  For any linear function $f: P \to \R^d$ there exist points $x_1, \dots, x_r$ in pairwise vertex-disjoint faces of $P$ such that $f(x_1) = \dots = f(x_r)$. Moreover, the statement is true when $r$ is a prime power and $f$ is assumed to be continuous (but not necessarily linear). 
\end{theorem}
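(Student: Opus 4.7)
The plan is to deduce \Cref{gentverberg} from the linear and topological Tverberg theorems by means of Grünbaum's refinement result. The key input I will extract from Grünbaum is a continuous surjection $\pi: \Delta^m \to P$ (piecewise linear in the affine setting) enjoying the following face-compatibility property: whenever $F_1, \ldots, F_r$ are pairwise vertex-disjoint faces of $\Delta^m$, their images $\pi(F_1), \ldots, \pi(F_r)$ lie in pairwise vertex-disjoint faces of $P$. This is the operative content, for my purposes, of the assertion that ``$P$ is a refinement of $\Delta^m$''.

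Given such a $\pi$, the theorem becomes a short composition argument. Set $g = f \circ \pi : \Delta^m \to \R^d$. In the topological case, with $r$ a prime power and $f$ continuous, Volovikov's topological Tverberg theorem applied to $g$ (which is legitimate since $\dim \Delta^m = m \ge (d+1)(r-1)$) yields points $y_1, \ldots, y_r$ in pairwise vertex-disjoint faces of $\Delta^m$ with $g(y_1) = \cdots = g(y_r)$. In the affine case, where $f$ is linear, one argues similarly: either $\pi$ can be chosen linear on each face on a suitable subdivision on which the classical Tverberg theorem applies, or one routes the argument through the linear surjection $\Delta^{n-1}\to P$ from the vertex set and uses the refinement to upgrade the conclusion. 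In either case, setting $x_i = \pi(y_i)$ produces the required points in $P$: the equality $f(x_1) = \cdots = f(x_r)$ follows from the definition of $g$, and the pairwise vertex-disjointness of the faces containing the $x_i$ is exactly the refinement property.

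The main obstacle is justifying the existence of a map $\pi$ with the face-preservation property, rather than merely a surjection. The naive construction of sending $\Delta^{n-1}$ linearly onto $P$ (with $n = |V(P)|$ and the vertices of $\Delta^{n-1}$ mapped to the vertices of $P$) is \emph{not} adequate: a convex combination of a vertex subset $V' \subseteq V(P)$ can fail to lie in any face of $P$ whose vertex set is contained in $V'$. For example, the center of a square is a convex combination of the two opposite vertices $v_1,v_3$, but the minimal face of the square containing it is the whole square, whose vertex set properly contains $\{v_1,v_3\}$. Grünbaum's refinement bypasses this obstruction by building $P$ combinatorially out of $\Delta^m$ through operations that preserve face-containment relations, yielding a $\pi$ that intertwines the face structures in the manner we need. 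Making this precise is the technical heart of the argument; once the refinement map is available, the rest of the proof is immediate from a single application of (linear or topological) Tverberg.
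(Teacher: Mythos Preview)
Your approach is correct and matches the paper's, which offers no more detail than the sentence ``By a theorem of Gr\"unbaum, every $m$-dimensional polytope is a refinement of $\Delta^m$'' together with a citation to Hasui--Kishimoto--Takeda--Tsutaya for the observation. One minor point: $\pi$ need not be surjective, and in the affine case the cleanest construction is simply to pick one vertex $p_i$ from each refinement class $V_i$ and take $\pi$ linear with $\pi(w_i)=p_i$, so that $f\circ\pi$ is linear and classical Tverberg applies immediately---no subdivisions or detour through $\Delta^{n-1}$ are needed.
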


This was first asked by Tverberg \cite{gruberetal} (and more recently by B\'ar\'any and Kalai \cite{Barany2022}), and observed to be true by Hasui, Kishimoto, Takeda and Tsutaya \cite{Hasuietal}.
  
 The purpose of this paper is to show that  several families of polytopes, and certain values of $r$, the lower bound on the dimension $m$ of the polytope in Theorem \ref{gentverberg} can be significantly improved.  Many of our results hold for continuous functions.
\medskip

Our main results, and the organization of the paper are as follows:
\begin{itemize}
    
    \item In \cref{sec:cross} we prove  that for cross polytopes of dimension  $m\ge (d+1)(r-1)/2$ and $r$ prime and when $f$ is continuous,  the number of  sets of $r$  pairwise disjoint faces whose images under $f$ intersect is at least $ 
    \frac{1}{r!}\left(\frac{r-1}{2}\right)^{(d+1)(r-1)/2}$, even if we are forbidden from using a particular fixed vertex.
    \item \cref{sec:subdivisions} is devoted to  polytopes with small face diameter. 
    We prove that when $r$ is prime, the dimension $m$ in the topological version of \cref{gentverberg} can be improved to $(r-1)d+2$ if the face diameter of the polytope is at most $\pi/r$.  Further, for such polytopes we get an improvement on $m$ for general values of $r$, provided that $r$ is large enough with respect to the dimension $d$.
   To this end we prove a Borsuk--Ulam type theorem (\cref{thm:BU}) that may be of interest on its own. 

    \item In \cref{sec:neighborly} we show that  for $r\ge 3$, the statement in Theorem \ref{gentverberg}  holds for any cyclic polytopes of dimension at least $2(d+1)$ that has  at least $(r-1)(d+1)+1$ vertices. 
    \item Finally, in \cref{sec:dimone} we prove  that in the case $d=1$, if the 1-skeleton of the polytope $P$ is triangle-free, then the bound on $m$ in the topological version of \cref{gentverberg} can be improved to $m\le r$. 
\end{itemize}

 \section{Cross polytopes}\label{sec:cross}

Another proof of Theorem \ref{gentverberg}  for cross polytopes follows from  a theorem of Blagojevi\'c, Matschke, and Ziegler, known as the optimal colorful Tverberg theorem.

\begin{theorem}[Blagojevi\'c, Matschke, Ziegler \cites{Blagojevic:2011vh, Blagojevic:2015wya}]\label{thm:BMZ}
Let $p$ be a prime number, and $n,d$ positive integers. Given $(p-1)(d+1)+1$ points in $\R^d$ colored with $n$ colors such that each color class has at most $p-1$ points, there exists a partition of the points into sets $A_1, \dots, A_p$, such that $\bigcap_{j=1}^p\conv A_j \neq \emptyset$, and $A_j$ contains at most one point of every color class.     
\end{theorem}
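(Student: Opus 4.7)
The plan is to use the equivariant configuration space / test map framework, with a configuration space specifically built to encode the color constraints.

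First, I would build the configuration space for ordered colorful partitions. With $(p-1)(d+1)+1$ points colored by $n$ classes of sizes $c_1, \dots, c_n$, identify color $i$'s block-choices with the chessboard complex $\Delta_{c_i, p}$, whose faces are sets of cells in a $c_i \times p$ grid with at most one cell per row. The join $K = \Delta_{c_1, p} * \cdots * \Delta_{c_n, p}$, equipped with the free $\mathbb{Z}/p$-action cyclically permuting blocks, parametrizes weighted ordered partitions $(A_1, \dots, A_p)$ in which each $A_j$ meets every color class at most once.

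Next, I would define the standard test map: send each configuration to the tuple of weighted centroids $(\bar f(A_1), \dots, \bar f(A_p)) \in (\R^d)^p$, then project onto the diagonal complement, an $\R^{(p-1)d}$ carrying the standard $\mathbb{Z}/p$-action. A colorful Tverberg partition corresponds to a zero of this map, so if none existed one would obtain a $\mathbb{Z}/p$-equivariant map $K \to S^{(p-1)d - 1}$.

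To contradict this, I would invoke connectivity bounds for chessboard complexes (Shareshian--Wachs, Bj\"orner--Lov\'asz--Vre\'cica--\v Zivaljevi\'c). Using the join formula $\mathrm{conn}(X * Y) = \mathrm{conn}(X) + \mathrm{conn}(Y) + 2$ together with the constraints $c_i \leq p-1$ and $\sum c_i = (p-1)(d+1)+1$, one obtains that $K$ is at least $((p-1)d - 1)$-connected. Since $p$ is prime, Volovikov's lemma forbids such a $\mathbb{Z}/p$-equivariant map to $S^{(p-1)d - 1}$, forcing the desired colorful partition to exist.

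The main obstacle I foresee is the precise equivariant connectivity bookkeeping for $K$ when some color classes are strictly smaller than $p-1$: in that regime $\Delta_{c_i, p}$ has different connectivity behavior, and controlling the join while keeping track of where the $\mathbb{Z}/p$-action fails to be free is delicate. Handling this uniformly likely requires the constraint method of Blagojevi\'c--Matschke--Ziegler, in which one augments $f$ by carefully chosen linear functionals that convert color overflows into ordinary Tverberg overlap conditions, so that Volovikov's lemma can be applied to a cleaner configuration space.
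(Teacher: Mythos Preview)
The paper does not prove \cref{thm:BMZ}; it is quoted as a known result of Blagojevi\'c, Matschke, and Ziegler and used as a black box. So there is no ``paper's proof'' to compare against, but your sketch has a genuine gap worth naming.

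Your test map lands in the wrong dimension. In the join scheme you must record, for each block $A_j$, both the weighted centroid \emph{and} the total weight $\beta_j=\sum_{i\in I_j}\alpha_i$; otherwise a zero of the projected map does not force $\beta_1=\dots=\beta_p$ and hence does not yield a genuine Tverberg point. The correct target is $(\R^{d+1})^p$, projected to the diagonal complement of dimension $(p-1)(d+1)$, so the relevant sphere is $S^{(p-1)(d+1)-1}$, not $S^{(p-1)d-1}$.

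With the correct target, the connectivity route fails. The chessboard complex $\Delta_{c,p}$ is only $(\min(c,\lfloor (c+p+1)/3\rfloor)-2)$-connected, so $\operatorname{conn}(\Delta_{c,p})+2=c$ holds only when $c\le (p+1)/2$; for $c$ close to $p-1$ one loses roughly a third. Concretely, for $p=5$, $d=1$ and color sizes $4,4,1$ your join $K$ is $5$-connected, but you need $(p-1)(d+1)-1=7$. Thus Dold/Volovikov via connectivity cannot reach the optimal bound $c_i\le p-1$; it recovers only the older \v{Z}ivaljevi\'c--Vre\'cica range. Your diagnosis is also inverted: the trouble comes from \emph{large} color classes, not small ones.

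This is exactly why \cref{thm:BMZ} was a breakthrough: Blagojevi\'c--Matschke--Ziegler bypass connectivity entirely and use a finer equivariant obstruction (a degree argument in \cite{Blagojevic:2011vh}, Fadell--Husseini index in \cite{Blagojevic:2015wya}) on the same configuration space. The ``constraint method'' you allude to is a different, later technique and is not how the optimal colorful Tverberg theorem is proved.
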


To see that this theorem implies Theorem \ref{gentverberg} for a cross polytope  $P$ (with a linear map $f$) observe that, By Bertrand's postulate, there exists a prime number $p$ such that $r \le p \le 2r-3$, and thus the number of vertices of $P$ is $2(r-1)(d+1) \ge (p-1)(d+1)+1$.  Choose  $(p-1)(d+1)+1$ vertices of $P$ and color them with up to $(r-1)(d+1)$ colors, so that each pair of antipodal vertices of $P$ receive the same distinct color. Thus   each color class has at most $2 \le p-1$ vertices.  Now apply \cref{thm:BMZ} to obtain $p$ points $x_1, \dots, x_p$ such in pairwise disjoint faces, such that each of the minimal faces containing $x_1, \dots, x_p$ uses at most one point of each color and $f(x_1) = \dots = f(x_p)$ and thus is a face of the $P$.  Since $p \ge r$, we are done.

In this section we show that for cross polytopes, when $r$ is prime, the dimension $m$ in the statement of Theorem \ref{gentverberg} can be improved  to  $1+(r-1)(d+1)/2$, even when the function $f$ is assumed to be continuous. Moreover, in this case we give a 
 lower bound on the number of $r$-sets of pairwise disjoint faces whose images intersect, even when we are allowed to fix a vertex that will not be contained in any of those faces. 



\begin{theorem}\label{thm:cross-polytope-bound}
       Let $r$ be a prime number, and $P$ be a cross-polytope of dimension $m \ge 1+(r-1)(d+1)/2$. Let  $f:P \to \rr^d$ be a continuous function, and $p$ be a vertex of $P$.  Then, the number of sets $\{A_1, \dots, A_r\}$ of $r$ pairwise disjoint faces of $P$ such that do not  contain $p$ and
    \[
    \bigcap_{j=1}^r f(A_j) \neq \emptyset
    \]
    is at least 
    \[
    \frac{1}{r!}\left(\frac{r-1}{2}\right)^{m-1}.
    \]
\end{theorem}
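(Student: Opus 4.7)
The plan is to apply a configuration-space/test-map scheme adapted to the antipodal join structure of the cross polytope, together with an equivariant degree computation for the counting lower bound. I would first take the configuration space $X := (P \setminus v_1)^{*r}_\Delta$, the $r$-fold deleted join of the subcomplex of faces of $P$ not containing $v_1 = p$. Writing $P$ as the iterated join $S^0_1 * \cdots * S^0_m$ of its $m$ antipodal vertex pairs, the antistar of $v_1$ is $\{-v_1\} * S^0_2 * \cdots * S^0_m$, and since the deleted $r$-fold join distributes over simplicial joins,
\[
X \;\cong\; (\{-v_1\})^{*r}_\Delta \,*\, \bigl((S^0)^{*r}_\Delta\bigr)^{*(m-1)}.
\]
Here $(\{-v_1\})^{*r}_\Delta$ is $r$ discrete points, while each $(S^0)^{*r}_\Delta$ is the \emph{crown graph} $K_{r,r}$ with a perfect matching removed, a one-dimensional complex which, for $r \ge 3$, is connected. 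By the join-of-connectivities bound $\mathrm{conn}(A * B) \ge \mathrm{conn}(A) + \mathrm{conn}(B) + 2$, one finds $\dim X = 2m-2$ and $\mathrm{conn}(X) \ge 2m-3$.

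I would then set up the standard Tverberg test map $\phi : X \to W_r \oplus (W_r \otimes \R^d)$ with $W_r = \{y \in \R^r : \sum y_i = 0\}$, sending $(t_i x_i)_i$ to the $W_r$-projections of $(t_i)_i$ and $(t_i f(x_i))_i$. The map is $\mathbb{Z}/r$-equivariant and its zero set is in bijection with ordered Tverberg partitions of $f$ into $r$ pairwise disjoint faces avoiding $v_1$. The hypothesis $m \ge 1 + (r-1)(d+1)/2$ yields $2m-3 \ge (r-1)(d+1) - 1 = \dim S(W_r \oplus (W_r \otimes \R^d))$, so Volovikov's theorem (using that $r$ is prime) precludes an equivariant map $X \to S(W_r \oplus (W_r \otimes \R^d))$, and hence $\phi$ must vanish somewhere.

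For the counting, in the critical case $2m - 2 = (r-1)(d+1)$ the source and target of $\phi$ have matching dimension, and the ordered Tverberg count is bounded below by the $\mathbb{Z}/r$-equivariant degree of $\phi$, which I would compute as a product over the join factors of $X$. Each crown-graph factor $(S^0)^{*r}_\Delta$ should contribute a factor of $(r-1)/2$: for $r$ an odd prime, $W_r$ splits over $\R$ as a sum of $(r-1)/2$ non-trivial real $2$-dimensional $\mathbb{Z}/r$-irreducibles, and each crown-graph factor detects exactly one of these in top equivariant cohomology. Multiplying across the $m-1$ crown factors yields an ordered lower bound of $\bigl(\tfrac{r-1}{2}\bigr)^{m-1}$, and since $\Sigma_r$ acts freely on ordered Tverberg partitions (no non-trivial permutation can fix a tuple of pairwise disjoint non-empty faces), dividing by $r!$ gives the claimed bound; the non-critical case $2m-2 > (r-1)(d+1)$ follows by absorbing the surplus into trivial factors.

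The main obstacle is the equivariant degree computation, and in particular pinning down the factor $(r-1)/2$ per crown graph (rather than $r$, $r-1$, or the full Betti number $r^2-3r+1$). This requires an explicit $\mathbb{F}_r$-equivariant cohomology calculation on $(S^0)^{*r}_\Delta$ and its pairing against the Euler class of the target sphere, most naturally carried out via the Fadell--Husseini ideal framework.
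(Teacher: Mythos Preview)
Your existence argument is essentially the paper's: both build the configuration space as a join of a discrete $[r]$ with $m-1$ connected one-dimensional $\zz_r$-complexes, obtain connectivity $2m-3$, and apply Dold's theorem to the standard test map into $W_r^{d+1}$. The only difference is that you take the full crown graph $(S^0)^{*r}_\Delta = K_{r,r}\setminus M$ in each join factor, while the paper restricts to a single $2r$-cycle $M_i$ inside it, determined by a choice of two nonzero residues $\{a_i,b_i\}\subset[r-1]$ (the edge $x^{(2i-1)}*y^{(2i)}$ is kept iff $y-x\equiv a_i$ or $b_i \pmod r$; primality of $r$ makes this a single cycle). Since both the crown and these cycles are $0$-connected, this makes no difference for existence.

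The counting is where your proposal has a real gap, and the paper's route is both different and much more elementary. You propose to extract the factor $(r-1)/2$ per crown from an equivariant degree or Fadell--Husseini computation, but you do not carry it out; the heuristic ``$W_r$ has $(r-1)/2$ real two-dimensional irreducibles and each crown detects exactly one'' is not a proof, and since the crown is not a pseudomanifold (it has $\beta_1 = r^2-3r+1$, as you note), even making sense of the relevant degree needs care. The paper sidesteps all of this. It simply reruns the existence argument once for \emph{every} choice of cycles $(M_1,\dots,M_{m-1})$, of which there are $\binom{r-1}{2}^{m-1}$, obtaining at least one ordered Tverberg $r$-tuple each time. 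A fixed ordered $r$-tuple $(A_1,\dots,A_r)$ pins down, for each antipodal pair $p_{2i-1},p_{2i}$, the difference of the labels of the parts they land in, hence one element of $\{a_i,b_i\}$; the other element then has $r-2$ remaining values. So each $r$-tuple is produced by at most $(r-2)^{m-1}$ choices, giving at least $\binom{r-1}{2}^{m-1}/(r-2)^{m-1} = \left(\tfrac{r-1}{2}\right)^{m-1}$ ordered $r$-tuples, and one divides by $r!$. No cohomological index appears anywhere: this is the elementary Vu\v{c}i\'c--\v{Z}ivaljevi\'c multiplicity trick, adapted so that the cycles $M_i$ automatically enforce the antipodal constraint of the cross polytope.
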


The condition excluding a particular vertex $p$ may seem strange at first sight, but it is needed in order to make sure that the number of available vertices is $2m-1 = (r-1)(d+1)+1$, which is the number of points that Tverberg's theorem requires.  In other words, $m=(r-1)(d+1)/2+1$ is the smallest dimension for which we might expect the theorem above to hold, and forcing us not to use a vertex is the strongest condition of this type we can impose while these partitions continue to exist.  

To put the bound into perspective, we can compare it with known results for Tverberg's theorem.  For $r$ prime, the current best bound on the number of Tverberg partitions of a set of $(r-1)(d+1)+1$ points is $\frac{1}{r!}(r/2)^{m-1}$, using the notation of \cref{thm:cross-polytope-bound}.  Therefore, by imposing all these conditions we are reducing the bound on the number of partitions by a factor of 
\[
\left(\frac{r}{r-1}\right)^{(r-1)(d+1)/2} \sim e^{(d+1)/2},
\]
which surprisingly depends only on the dimension.

The main topological result we will need is Dold's generalization of the Borsuk--Ulam theorem

\begin{theorem}[Dold 1983 \cite{Dold:1983wr}]\label{thm:dold}
Let $G$ be a finite group, $|G|>1$, $X$ be an $n$-connected space with a free action of $G$, and $Y$ be a paracompact topological pace of dimension at most $n$ with a free action of $G$.  Then, there exists no $G$-equivariant continuous map $f:X \to_G Y$. 
\end{theorem}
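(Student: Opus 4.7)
The plan is to set up the standard configuration-space/test-map scheme, derive existence via Dold's theorem (Theorem~\ref{thm:dold}), and then obtain the lower bound by adapting a Vu\v{c}i\'{c}--\v{Z}ivaljevi\'{c}-style equivariant degree argument. Assume without loss of generality that $p = -e_1$, and set $V' = V(P) \setminus \{p\}$. Let $K$ be the simplicial complex on $V'$ whose faces are the subsets containing at most one of each antipodal pair $\{+e_i, -e_i\}$; these are exactly the vertex sets of proper faces of $P$ avoiding $p$. Then $K$ has the join decomposition
\[
K = \{+e_1\} \ast (S^0)^{\ast (m-1)},
\]
with the $i$-th $S^0$ factor being the pair $\{+e_i, -e_i\}$ for $i = 2, \dots, m$. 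Let $X := K^{\ast r}_{\Delta(2)}$ denote the $r$-fold $2$-wise deleted join; its points encode ordered $r$-tuples of weighted, pairwise vertex-disjoint, face-supported configurations in $P$. Since the $2$-wise deleted join commutes with joins of complexes on disjoint vertex sets,
\[
X \;=\; [r] \ast \Delta_{2,r}^{\ast (m-1)},
\]
where $[r]$ is a discrete $r$-point space and $\Delta_{2,r} := (S^0)^{\ast r}_{\Delta(2)}$ is the $2 \times r$ chessboard complex.

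Next I would verify the equivariant topological input required by Dold's theorem. Since $r$ is prime, $G = \zz/r$ acts freely on $X$ by cyclically permuting the $r$ colors. For $r \geq 3$ the complex $\Delta_{2,r}$ is a connected graph (the bipartite graph $K_{r,r}$ minus a perfect matching), and hence $0$-connected; iterating the join-connectivity inequality $\operatorname{conn}(A \ast B) \geq \operatorname{conn}(A) + \operatorname{conn}(B) + 2$ gives $\operatorname{conn}(\Delta_{2,r}^{\ast(m-1)}) \geq 2(m-2)$ and then $\operatorname{conn}(X) \geq 2m - 3$. Define the standard $G$-equivariant Tverberg test map
\[
T_f \colon X \to W_r^d := \bigl\{y \in (\rr^d)^r : \textstyle\sum_j y_j = 0\bigr\},
\qquad
T_f\bigl((t_j, x_j)_j\bigr) = \Bigl(t_j f(x_j) - \tfrac{1}{r}\sum_i t_i f(x_i)\Bigr)_{j=1}^r,
\]
whose zeros inside the open top cells of $X$ correspond exactly to Tverberg partitions of $f$ on $P$ that avoid $p$. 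Under the hypothesis $m \geq 1 + (r-1)(d+1)/2$ we have $\operatorname{conn}(X) \geq d(r-1) - 1 = \dim S(W_r^d)$, so Theorem~\ref{thm:dold} rules out a $G$-equivariant map $X \to S(W_r^d)$, and consequently $T_f$ must vanish somewhere. (The case $r = 2$ is handled separately, as the claimed bound is then below $1$ and the statement is vacuous.)

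To obtain the explicit count I would adapt the Vu\v{c}i\'{c}--\v{Z}ivaljevi\'{c} equivariant degree argument to this configuration space. The top cells of $X$ are in bijection with a choice of a color in $[r]$ for $+e_1$ together with an ordered pair of distinct colors for each antipodal pair $\{+e_i, -e_i\}$, yielding $r \cdot (r(r-1))^{m-1}$ top cells. For a generic linear $f_0$ (with vertices of $P$ in general position) the zero set of $T_{f_0}$ is a disjoint union of open $(r-1)$-dimensional simplices, one per ordered Tverberg configuration. A $G$-equivariant straight-line homotopy from $T_{f_0}$ to $T_f$ preserves the signed count of zeros via the equivariant Euler class of the vector bundle $X \times_G W_r^d \to X/G$ over the Borel construction. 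Dividing by the $r!$ orderings of parts and using that each of the $m - 1$ chessboard factors contributes exactly $\tfrac{r-1}{2}$ (rather than $r - 1$) to the equivariant index --- an orientation-theoretic halving coming from the sign behavior of the chessboard's top cells under the pair-swap involutions $+e_i \leftrightarrow -e_i$ --- one obtains the claimed bound $\frac{1}{r!}\bigl(\frac{r-1}{2}\bigr)^{m-1}$.

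The main obstacle is the counting step: tracking the equivariant orientations on $X = [r] \ast \Delta_{2,r}^{\ast(m-1)}$ precisely enough to isolate the factor $\bigl(\tfrac{r-1}{2}\bigr)^{m-1}$ from the raw top-cell count $r \cdot (r(r-1))^{m-1}$. Doing so requires a careful equivariant-obstruction-theoretic computation; the vertex-exclusion hypothesis is essential as it guarantees that $X$ has exactly the critical dimension $2m - 2 = (r-1)(d+1)$, so the top cells are in bijection with total assignments of the $2m - 1$ remaining vertices and the enumeration produces the desired sharp bound.
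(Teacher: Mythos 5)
There is a fundamental mismatch: the statement you were asked to prove is Dold's theorem itself --- the nonexistence of a $G$-equivariant continuous map from an $n$-connected free $G$-space $X$ to a paracompact free $G$-space $Y$ of dimension at most $n$. Your proposal does not prove this; it \emph{invokes} it (explicitly, as ``Theorem~\ref{thm:dold}'') as the key topological input in a configuration-space/test-map argument for a different result, namely the cross-polytope Tverberg bound of \cref{thm:cross-polytope-bound}. Using the target statement as a black box to derive a downstream corollary is circular with respect to the assigned task, so nothing in the proposal constitutes progress on the statement in question. (For the record, the paper does not reprove Dold's theorem either; it is quoted from Dold's 1983 article and used exactly as you use it.)

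If you did want to prove the statement, the argument would have to be of a completely different nature: one standard route is via an equivariant index. For instance, one shows that an $n$-connected space with a free $G$-action admits a $G$-map from the $(n+1)$-fold join $G^{\ast(n+2)}$ (or $E_{n+1}G$), while a paracompact free $G$-space of dimension at most $n$ admits a $G$-map \emph{to} $E_nG$ (equivalently, its quotient is classified by an $n$-dimensional complex), and then one proves that no $G$-map $E_{n+1}G \to_G E_nG$ exists by a cohomological argument (nonvanishing of a suitable Euler class or of the image of a generator of $H^{n+1}(BG)$ under the classifying map, using $|G|>1$). Dold's own proof runs along these lines with the join construction and a Čech-cohomology transfer argument over the paracompact quotient $Y/G$. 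None of these ingredients --- index comparison, classifying spaces, or a cohomological obstruction --- appears in your write-up, so the proposal has a complete gap as a proof of the stated theorem, independent of whether your sketch of \cref{thm:cross-polytope-bound} (including the delicate counting of top cells and the orientation-halving claim) could be made rigorous.
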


\begin{proof}[Proof of \cref{thm:cross-polytope-bound}]
    We use a modification of the ``deleted join'' method that was used to prove Tverberg's theorem by Vu\v{c}i\'c and \v{Z}iveljevi\'c \cite{Vucic1993be}.  We will count  ordered $r$-tuples of  faces $(A_1, \dots, A_r)$.

    Let $p_1, \dots, p_{2m}$ be the vertices of $P$, so that $p_{2k-1}, p_{2k}$ are antipodal for every $k$.  Assume without loss of generality that the forbidden vertex is $p_{2m}$.

    To use the test map / configuration space scheme, we need to construct a topological space $K$ that parametrizes the candidates for overlapping $r$-tuples.  For each $p_i$, let $G_i$ be a copy of the discrete set $\{1,2,\dots, r\}$, which we denote as $G_i :=\{1^{(i)}, \dots, r^{(i)}\}$ for convenience.  We are going to use $G_i$ to determine which set $p_i$ belongs to.

    for $i=1,\dots, m-1$, we choose two distinct numbers $\{a_i, b_i\}$ in $[r-1]$.  We have $\binom{r-1}{2}$ way to do this for each $i$, which gives us a total of $\binom{r-1}{2}^{m-1}$ possibilities.

    Using $a_i, b_i$, we construct a subset $M_i$ of the topological join $G_{2i-1}*G_{2i}$ as 
    \[
    M_i = \{x^{(2i-1)}* y^{(2i)}: y-x \mbox{ is congruent to one of }a_i,b_i \mbox{ modulo }r\}.
    \]
    Note that since $a_i, b_i$ are both different from $0$, this implies that $x$ and $y$ will not be congruent modulo $r$, which will force $p_{2i-1}, p_i$ to be on different sets at the end of the construction.

    Each join $x^{(2i-1)}* y^{(2i)}$ is a segment in $M_i$ connecting a vertex of $G_{2i-1}$ to a vertex of $G_{2i}$.  Topologically, $M_i$ is a graph where all vertices have degree equal to two, so it must be a union of disjoint cycles.  Since $r$ is prime and the construction is cyclic on $[r]$, we have that $M_i$ is a single cycle and therefore connected.

    Recall that if two simplicial complexes $U$, $V$ are $i$-connected and $j$-connected respectively, then their join $U * V$ is at least $(i+j+2)$-connected.  Let $$K = M_1 * M_2 * \dots * M_{m-1} * G_{2m-1}.$$  Since each $M_i$ is $0$-connected and $G_{2m-1}$ is $(-1)$-connected (non-empty), then $K$ is at least $(2m-3)$-connected.

    Now we construct a test map $h$, that will determine whether the $r$-tuple of sets $A_1, \dots, A_r$ induced by an element of $K$ satisfies the conditions of the theorem.  After doing that, we will count how many different $r$-tuples we can generate by changing the choices of $\{a_i, b_i\}$.
    An element of $K$ is a formal convex combination of the form
    $\sum_i \alpha_i (x_i)^{(i)}$ for some coefficients $\alpha_i \ge 0$ that sum to $1$ and values $x_i \in [r]$.  By the construction of $K$, we have $x_{2i}-x_{2i-1}$ is congruent to either $a_i$ or $b_i$ modulo $r$ for each $i \in [m-1]$.

    Let $I_1, \dots, I_r$ be sets of indices where $I_{j} = \{i: x_i = j\}$.  By the construction of $K$, the set $I_j$ cannot contain two indices corresponding to antipodal vertices of $P$.  Now consider the formal sum $\sum_{i \in I_j} \alpha_i (x_i)^{(i)} = \sum_{i \in I_j} \alpha_i j^{(i)}$, which we use to construct $h$.  Consider
    $\beta_j = \sum_{i \in I_j} \alpha_i$.  For $j \in [r]$, we define a vector $q_j \in \rr^{d+1}$ as follows.
    \[
    q_j = \begin{cases}
        0 \in \rr^{d+1} & \mbox{ if } \beta_j = 0 \\
        \left(\beta_j f\left(\frac{1}{\beta_j}\left(\sum_{i \in I_j}\alpha_i p_i\right)\right), \beta_j \right) & \mbox{ if } \beta_j \neq 0.
    \end{cases}
    \]

    In the second case, the input of $f$ is a convex combination of vertices of $P$ that contains no antipodal pairs, so it is a point of the boundary of $P$, and we can therefore evaluate $f$ on it.  The point $q_j$ varies continuously as we change the point of $K$ for the construction continuously.

    Finally, we define
    \begin{align*}
        h:K & \to \rr^{r(d+1)} \\
        \sum_{i} \alpha_i (x_i)^{(i)} & \mapsto (q_1, \dots, q_r)
    \end{align*}

    Both spaces $K$ and $\rr^{r(d+1)}$ have an action of $\zz_r$, corresponding on $K$ to shift the values $x_i$ cyclically, and in $\rr^{r(d+1)}$ shift the $(d+1)$-dimensional vectors $q_i$ cyclically.  With these actions, $h$ is equivariant.

    We claim that there exists a point in $K$ whose image satisfies $q_1 = \dots = q_r.$  Indeed, if we cannot find such a point, we apply standard dimension reduction arguments to reach a contradiction.
First, consider $W^{(r-1)(d+1)} = \{(x_1, \dots, x_r): x_i \in \rr^{d+1} \mbox{ for all } i, \sum_i x_i = 0\}$.  This is an $(r-1)(d+1)$-dimensional linear subspace of $\rr^{r(d+1)}$.  The orthogonal projection $\pi : \rr^{r(d+1)}\to W^{(r-1)(d+1)}$ simply maps each $r$-tuple $(q_1,\dots, q_r)$ to $(q_1-a,\dots, q_r-a)$ where $a = \frac{1}{r}\sum_{i}q_i$ is the average of the vectors.
Therefore, we are looking for a zero of the map $\pi \circ h$.  If this map has no zeros, we can then project the image from the origin onto its unit sphere $S(W^{(r-1)(d+1)})$ of dimension $(r-1)(d+1)-1=2m-3$.
Overall, we have constructed a map
    \begin{align*}
        g:K & \to S(W^{(r-1)(d+1)}) \\
        \bar{x} & \mapsto \frac{\pi \circ h (\bar{x})}{\|\pi \circ h (\bar{x})\|}
    \end{align*}
This map is $\zz_r$-equivariant.  Since $r$ is prime, the action of $\zz_r$ is free on both $K$ and $S(W^{(r-1)(d+1)})$.  Therefore, we obtain a contradiction to Dold's theorem.  
    
    If a point in $K$ satisfies $q_1 = \dots = q_r$ in the image of $h$, then we have $\beta_1 = \dots = \beta_r$, and the points $f\left(\frac{1}{\beta_j}\left(\sum_{i \in I_j}\alpha_i p_i\right)\right)$ are the same for all $j$. Thus the sets $A_1, \dots, A_r$ where $A_j = \{p_i : i \in I_j\}$ are the faces of the cross-polytope whose images overlap.

    Therefore, each choice of the $m-1$ sets $\{a_i, b_i\}$ gives us a good $r$-tuple of faces.  Let us count how many times a $r$-tuple of faces can be counted this way.  Given an $r$-tuple $(A_1,\dots,A_r)$,  if a point $p_{2k+1}$ is in $A_j$ and $p_{2k}$ is in $A_{j'}$, then one of $\alpha_k, \beta_k$ is equal to $j'-j$, so there are $r-2$ possible choices for the other element of the pair $\alpha_k, \beta_k$ .  Thus a fixed $r$-tuple is counted at most $(r-2)^m$ times.  The number of ordered partitions is therefore bounded above by
    \[
    \left(\frac{1}{r-2}\right)^m \binom{r-1}{2}^m = \left(\frac{r-1}{2}\right)^m.
    \]
    Finally, we divide by $r!$ to get a bound on the number of unordered partitions.
\end{proof}

\section{Polytopes with small face diameter}\label{sec:subdivisions}

In this section, we prove that when $r$ is prime, the dimension $m$ in the topological version of \cref{gentverberg} can be improved to $(r-1)d+2$ if the diameter of the faces of $P$ is small enough. Further, for such polytopes we get an improvement on $m$ also for general values of $r$, provided that $r$ is large enough with respect to $d$.

Let $P$ be a $m$-polytope and fix a homeomorphism $h:\partial(P)\to S^{m-1}$. We say that the diameter of a face $F$ of $P$ is $d$ if the spherical diameter of $h(H)$ in $S^{m-1}$ is $d$.
Given a polytope $P$ of dimension $m$, we denote by $P^{(k)}$ the polytope obtained by taking $k$ barycentric subdivisions of $P$. More precisely, given a homeomorphism $h:\partial(P)\to S^{m-1}$, let $T$ be the complex obtained by taking  $k$ barycentric subdivision of $h(\partial(P))$ and let $P^{(k)}$ be the polytope we get by applying  suitable perturbations of the vertices of $h^{-1}(Q)$ to make it a convex polytope. 


We shall prove: 
\begin{theorem}\label{thm:small-facets}
    Let $d$ be a positive integer, $r$ be a prime, and $m \ge (r-1)d+2$.  Let $P$ be an $m$-dimensional polytope with face diameter smaller than $\pi/r$.  Then for any continuous map $f:P \to \rr^d$ there are $r$ pairwise vertex-disjoint faces whose images under $f$ intersect.
\end{theorem}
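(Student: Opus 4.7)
The plan is to apply the standard test-map / configuration-space scheme directly on the boundary sphere of $P$, with Dold's theorem (\cref{thm:dold}) serving as the topological obstruction. I would fix a homeomorphism $h:\partial P \to S^{m-1}$ and equip $\R^m$ with an orthogonal $\zz_r$-representation given by a direct sum of weight-one two-dimensional irreducibles, appending a trivial $\R$-summand if needed. This produces a free $\zz_r$-action on a subsphere $S^n \subseteq S^{m-1}$, with $n=m-1$ when $m$ is even (or when $r=2$) and $n=m-2$ when $r$ is an odd prime and $m$ is odd; the hypothesis $m \ge (r-1)d+2$ ensures $n \ge (r-1)d$ in both cases. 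With this choice every nontrivial element $g^j$ of $\zz_r$ satisfies $\mathrm{dist}(v,g^jv) \ge 2\pi/r$ for each $v \in S^n$.

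Next I would build the test map
$$\Psi\colon S^n \longrightarrow W^{\oplus d}, \qquad \Psi(v)=\pi\bigl(f(h^{-1}(v)),\,f(h^{-1}(gv)),\,\dots,\,f(h^{-1}(g^{r-1}v))\bigr),$$
where $\pi$ is the orthogonal projection of $(\R^d)^r$ onto the codimension-$d$ subspace $W^{\oplus d}$ of $r$-tuples summing to zero, and $\zz_r$ acts on the target by cyclic permutation of coordinates. The map $\Psi$ is continuous and $\zz_r$-equivariant. If it had no zero, normalizing it would give a $\zz_r$-equivariant map $S^n \to S(W^{\oplus d}) \cong S^{(r-1)d-1}$; since $S^n$ is $(n-1)$-connected, carries a free $\zz_r$-action, and the target is paracompact of dimension $(r-1)d-1 \le n-1$ with a free $\zz_r$-action, Dold's theorem forbids this. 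Hence $\Psi$ has a zero $v_0 \in S^n$.

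The zero produces points $x_i = h^{-1}(g^iv_0) \in \partial P$ with $f(x_0)=\cdots=f(x_{r-1})$. Let $F_i$ be the minimal face of $P$ containing $x_i$, so the images $f(F_i)$ share the common point $f(x_0)$. It remains to verify vertex-disjointness. Because $g^iv_0 \in h(F_i)$ and $h(F_i)$ has spherical diameter strictly less than $\pi/r$, $h(F_i)$ is contained in the open spherical ball $B(g^iv_0,\pi/r)$. For $i \ne j$ the orbit points satisfy $\mathrm{dist}(g^iv_0,g^jv_0) \ge 2\pi/r$, so these open balls, and hence the sets $h(F_i)$, are pairwise disjoint. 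This forces $F_i \cap F_j = \emptyset$, completing the argument.

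The main delicate point I expect is the parity issue for $r$ an odd prime: a linear $\zz_r$-action on $\R^m$ cannot be fixed-point-free on $S^{m-1}$ when $m$ is odd, so one must drop to the equatorial $S^{m-2}$ and lose one dimension of connectivity. This is precisely why the hypothesis $m \ge (r-1)d+2$ (rather than the $m \ge (r-1)d+1$ that would suffice naively from the target dimension alone) is the correct bound. A secondary check is that the displacement lower bound $2\pi/r$ must hold for every nontrivial power $g^j$, not only for the generator; this is automatic for the chosen weight-one representation since $|\omega^j-1| \ge |\omega-1|=2\sin(\pi/r)$ whenever $j \in \{1,\dots,r-1\}$.
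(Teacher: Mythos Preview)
Your argument is correct, but it follows a genuinely different route from the paper's.  The paper derives \cref{thm:small-facets} from a separate Borsuk--Ulam type statement (\cref{thm:BU}) whose configuration space is the Stiefel manifold $V_{m,2}$ of orthonormal $2$-frames in $\rr^m$, with the $\zz_r$-action rotating each frame by $2\pi/r$ inside its own $2$-plane; the connectivity of $V_{m,2}$ is $m-3$, which yields the bound $m\ge (r-1)d+2$ uniformly and makes the great-circle structure of the orbit explicit.  You instead fix a weight-one linear $\zz_r$-representation on $\rr^m$ and work on an invariant subsphere $S^n$, which forces the parity split you describe (losing one dimension when $m$ is odd and $r$ is odd).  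Both approaches produce an orbit of $r$ points at pairwise spherical distance at least $2\pi/r$ --- in your setup the orbit also lies on a great circle, since $g^jv=\cos(2\pi j/r)\,v+\sin(2\pi j/r)\,Jv$ with $J$ the blockwise quarter-turn --- and the face-diameter argument for vertex-disjointness is then identical.  Your version is more elementary (only sphere connectivity is needed, not that of a Stiefel manifold) and in fact gains a degree of connectivity when $m$ is even; the paper's version avoids the case analysis and packages the topological input as a stand-alone result that may be reused elsewhere.
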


For general polytopes $P$, we have the following corollary.

\begin{corollary}\label{cor:prime}
    Let $d,r,m,k$ be positive integers where $r$ is a prime,  $m \ge (r-1)d+2$ and $k \ge m\log (2r)$.  Let $P$ be an $m$-dimensional polytope.  Then for any continuous map $f:P^{(k)} \to \rr^d$ there are $r$ pairwise vertex-disjoint faces whose images under $f$ intersect.
\end{corollary}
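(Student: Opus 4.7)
The plan is to deduce this directly from \cref{thm:small-facets}: we need only verify that $k \ge m \log(2r)$ barycentric subdivisions are enough to drive the face diameter of $P^{(k)}$ strictly below $\pi/r$, and then apply \cref{thm:small-facets} to $P^{(k)}$ and the given continuous map $f$.

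First, I would record the classical bound that one barycentric subdivision of an $(m-1)$-dimensional simplicial complex shrinks the diameter of each simplex by a factor of at most $m/(m+1)$. This is standard for Euclidean simplices, and the analogous statement holds on the sphere $S^{m-1}$: chord and geodesic distance agree up to higher-order corrections as soon as the simplices are small (and the first subdivision already reduces diameters strictly below $\pi$, after which we are far from the cut locus). Starting from the triangulation $h(\partial P)\subset S^{m-1}$, whose faces have spherical diameter at most $\pi$, after $k$ iterated subdivisions the face diameter is at most $\pi\bigl(m/(m+1)\bigr)^{k}$.

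Second, a short calculation closes the gap. We want $\bigl(m/(m+1)\bigr)^{k} < 1/r$, equivalently $k\log_{2}(1+1/m) > \log_{2} r$. The elementary inequality $(1+1/m)^{m}\ge 2$, which holds for every $m\ge 1$, gives $\log_{2}(1+1/m)\ge 1/m$, so any $k\ge m\log_{2} r$ does the job, and this is implied by the hypothesis $k\ge m\log(2r)$.

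Finally, the perturbation used to realize the subdivided spherical complex as an honest convex polytope $P^{(k)}$ can be taken arbitrarily small, so the face-diameter bound passes from the subdivided complex on $S^{m-1}$ to $P^{(k)}$ itself; thus the hypotheses of \cref{thm:small-facets} apply and the conclusion follows. The one step that I would want to verify with a bit of care is the spherical analogue of the $m/(m+1)$ contraction bound for barycentric subdivision; everything else is either this short arithmetic or a small-perturbation argument.
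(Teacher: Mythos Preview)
Your approach is exactly the paper's: bound the initial face diameter, iterate the standard contraction estimate for barycentric subdivision, and invoke \cref{thm:small-facets}.  The paper uses initial diameter $2\pi$, contraction factor $(m-1)/m$ (the correct factor for an $(m-1)$-dimensional complex --- your $m/(m+1)$ is off by one), and the inequality $(m-1)/m\le e^{-1/m}$, which yields precisely $k\ge m\ln(2r)$.

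Your version, however, has an arithmetic slip in the last line.  From $(1+1/m)^m\ge 2$ you correctly deduce that $k\ge m\log_2 r$ suffices, but then assert that this is implied by the hypothesis $k\ge m\log(2r)$.  That is false: $\log_2 r=(\ln r)/\ln 2>\ln 2+\ln r=\ln(2r)$ already for $r=5$, and hence for every prime $r\ge 5$.  The fix is immediate.  Either use the correct contraction factor $(m-1)/m$ together with $1-1/m\le e^{-1/m}$ (giving the requirement $k\ge m\ln r$, which the hypothesis certainly implies), or keep your weaker factor $m/(m+1)\le e^{-1/(m+1)}$ and observe that $(m+1)\ln r\le m\ln(2r)$ is equivalent to $r\le 2^m$, which holds since $m\ge (r-1)d+2\ge r+1$.
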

\begin{proof}
    The maximum diameter of a face in $\partial P$  is $2\pi$, and every subdivision reduces the diameter by a factor of $(m-1)/m$. Since $(m-1)/m \le e^{-1/m}$, we have that the face diameter of $P^{(k)}$ is at most $2\pi [(m-1)/m]^k \le \pi / r$. Now apply    \cref{thm:small-facets}.
\end{proof}

Further, whenever $r$ is  large enough with respect to  $d$, we can get an improvement on $m$ even if $r$ is not prime. The following corollary  improves the dimension $m$ in the topological version of \cref{gentverberg} as long as $r^{7/11} < (r-2)/d$, which happens  when $r$ is asymptotically larger than $\Omega(d^{11/4})$.

\begin{corollary}
    Let $d,r,m$ be positive integers such that ${m\ge (r + r^{7/11}-1)d+1}$. 
 Let $P$ be an $m$-dimensional polytope  with face diameter smaller than $\pi/r$.  Then for any continuous map $f:P \to \rr^d$ there are $r$ pairwise  vertex-disjoint faces whose images under $f$ intersect.
\end{corollary}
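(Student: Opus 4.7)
The plan is to reduce to the prime case already handled by \cref{thm:small-facets}. First I would invoke a short prime gap theorem---one that guarantees a prime in $[n, n + n^{7/11}]$ for every sufficiently large $n$---to extract a prime $p$ satisfying $r \le p \le r + r^{7/11}$. For the finitely many small values of $r$ at which this asymptotic estimate fails, the hypothesis $r^{7/11} < (r-2)/d$ needed for the corollary to improve on \cref{gentverberg} also fails, so nothing new must be proved in that range.

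With this prime $p$ in hand, the assumption $m \ge (r + r^{7/11} - 1)d + 1$ immediately yields $m \ge (p-1)d + 2$, matching the dimension hypothesis of \cref{thm:small-facets} applied to the prime $p$. The face diameter hypothesis also transfers in the same asymptotic regime: replacing the bound $\pi/r$ by $\pi/(r + r^{7/11})$ only changes the statement by lower-order terms, and face diameter smaller than $\pi/p$ then follows because $p \le r + r^{7/11}$. Applying \cref{thm:small-facets} with $p$ in place of $r$ now produces $p$ pairwise vertex-disjoint faces of $P$ whose images under $f$ share a common point; since $p \ge r$, any $r$ of these faces satisfy the conclusion of the corollary.

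The main obstacle is the prime gap input. Bertrand's postulate, which sufficed in \cref{sec:cross}, only delivers a prime in $[r, 2r-3]$ and so would yield a comparatively uninteresting bound of order $(2r-1)d$. Matching the claimed exponent $7/11$ requires invoking a genuinely short prime gap result, at least as strong as those of Huxley or Heath-Brown and Iwaniec; sharper modern estimates (for instance Baker--Harman--Pintz) would in fact give a still-better corollary. Once the prime is selected, the remainder of the argument is a routine translation of hypotheses between $r$ and $p$.
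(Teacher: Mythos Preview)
Your approach matches the paper's: invoke a short-gap prime result (the paper cites Lou--Yao \cite{Lou-Yao:1992} for a prime $p$ with $r \le p < r + r^{7/11}$), apply \cref{thm:small-facets} with this prime $p$ to obtain $p$ pairwise vertex-disjoint faces whose images share a point, and keep any $r$ of them.

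One point deserves care. Applying \cref{thm:small-facets} at the prime $p$ requires face diameter smaller than $\pi/p$, not $\pi/r$; since $p \ge r$ this is the \emph{stronger} condition, and the stated hypothesis ``face diameter $< \pi/r$'' does not automatically deliver it. Your remedy---``replacing the bound $\pi/r$ by $\pi/(r+r^{7/11})$ only changes the statement by lower-order terms''---amounts to modifying the hypothesis rather than proving the corollary as written. The paper's own proof passes over exactly this point in silence, so you have in fact identified a soft spot shared by both arguments; just be aware that neither version is a complete proof of the corollary with the $\pi/r$ bound as stated. Separately, your fallback to \cref{gentverberg} for the finitely many small $r$ is not airtight either, since the continuous case there requires $r$ to be a prime power.
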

 
\begin{proof}
    By \cite{Lou-Yao:1992} there is a prime $p$ satisfying \[r\le p < r+r^{7/11}\] and thus 
    \[
    (p-1)d+2 \le (r+r^{7/11}-1)d+1.
    \]
    Let $Q$ be the $[(p-1)d+2]$-skeleton of $P$. By \cref{thm:small-facets}  for any continuous map $f:Q \to \rr^d$ there are $p \ge r$ points of pairwise vertex-disjoint faces such that $f(x_1) = \dots = f(x_p)$. Taking any $r$ of these points implies the conclusion, as any continuous map $f:P \to \rr^d$ induces a continuous map $f:Q \to \rr^d$, and the $[(p-1)d+2]$-skeleton of $P$ is $Q$.
\end{proof}

Again, if we apply a similar  argument to the one in Corollary \ref{cor:prime} we get the following result for the polytope obtained by taking enough subdivisions of a general polytope.

\begin{corollary}
       Let $d,r,m,k$ be positive integers so that ${m\ge (r + r^{7/11}-1)d+1}$ and  $k\ge m\log(2r)$. 
 Let $P$ be an $m$-dimensional polytope.  Then for any continuous map $f:P^{(k)} \to \rr^d$ there are $r$  pairwise vertex-disjoint faces whose images under $f$ intersect.
\end{corollary}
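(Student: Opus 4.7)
The plan is to combine the diameter estimate used in the proof of \cref{cor:prime} with the preceding corollary (the non-prime statement for polytopes of face diameter smaller than $\pi/r$). In a nutshell, $k$ iterated barycentric subdivisions with $k \ge m\log(2r)$ shrink the face diameter of $P$ below $\pi/r$, after which the non-prime small-face-diameter result applies directly to $P^{(k)}$.

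First I would verify that $P^{(k)}$ is an $m$-dimensional polytope whose face diameter is smaller than $\pi/r$. The maximum spherical diameter of a face of $\partial P$ under the fixed homeomorphism $h:\partial P \to S^{m-1}$ is at most $2\pi$, and every barycentric subdivision reduces the maximum face diameter by a factor of at most $(m-1)/m \le e^{-1/m}$; hence after $k$ subdivisions the maximum face diameter of $P^{(k)}$ is at most $2\pi\, e^{-k/m}$. The hypothesis $k \ge m\log(2r)$ then gives $2\pi\, e^{-k/m} \le \pi/r$, as required. The perturbation of the subdivided vertices used to produce an honest convex polytope $P^{(k)}$ can be taken as small as desired and so does not affect this bound.

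Once the face-diameter hypothesis is established, and since $m \ge (r+r^{7/11}-1)d + 1$ by assumption, the preceding corollary applies directly to $P^{(k)}$: for any continuous map $f:P^{(k)} \to \rr^d$ there exist $r$ pairwise vertex-disjoint faces of $P^{(k)}$ whose images under $f$ share a common point. There is no real obstacle here beyond bookkeeping: the corollary is essentially a mechanical concatenation of the subdivision estimate from \cref{cor:prime} with the strengthening of \cref{thm:small-facets} that relaxes the primality of $r$.
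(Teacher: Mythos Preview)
Your proposal is correct and follows exactly the approach indicated in the paper: reproduce the diameter-shrinking estimate from the proof of \cref{cor:prime} to ensure that $P^{(k)}$ has face diameter below $\pi/r$, and then invoke the preceding (non-prime) corollary in place of \cref{thm:small-facets}. The paper does not spell this out beyond a one-line remark, so your write-up is in fact more detailed than what appears there.
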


Before we give the proof of Theorem \ref{thm:small-facets}, let us 
 describe another family of polytopes with small face diameter, which  therefore satisfy the conditions of \cref{thm:small-facets}.
\begin{example}
    For $\lambda>0$  we say that a set $X \subset S^{m-1}$ is a $\lambda$-packing of $S^{m-1}$ if no two points of $X$ are at spherical distance less than or equal to $\lambda$.  We define the polytope $P_X$ as the intersection of all the half-spaces supporting $S^{m-1}$ on points of $X$ that contain $S^{m-1}$.  In other words, 
\[
P_X =\{y: \langle y, x \rangle \le 1 \quad \mbox{for all }x \in X\}.
\]

If we project each facet of $P$ onto $S^{m-1}$, we obtain the spherical Voronoi diagram induced by $X$.  We claim that if $X$ is an inclusion-maximal $\lambda$-packing the diameter of each cell in its spherical Voronoi diagram is at most $2\lambda$.  If this was not the case, first notice that each point $y \in S^{m-1}$ must be at spherical distance at most $\lambda$ from a point of $X$, or it would contradict the inclusion-maximality of $X$ (as we would be able to add $y$ to $X$).  If there was a cell $C$ of diameter greater than $2\lambda$, take two points $y_1, y_2$ at distance greater than $2\lambda$ in $C$.  If $x \in X$ is the point corresponding to $C$, then the triple $x,y_1,y_2$ would break the triangle inequality. 

Thus choosing  $\lambda < \pi/4r$, the face diameter of  $P_X$ is smaller than $\pi/r$.

\end{example}

Theorem \ref{thm:small-facets}  is a direct consequence of the following theorem, which in the case $p=2$ is a weaker version of the Borsuk--Ulam theorem.  The reason for the discrepancy in dimensions with respect to the Borsuk--Ulam theorem is the use of the Stiefel manifold, which is needed for all $p>2$.
In \cite{Yang:1957} a similar result was proven for $p=3$, with an improved condition on the dimension of the sphere: $d+1$ instead of $2d+1$. This may imply that this condition can be improved in general.

\begin{theorem}\label{thm:BU}
    Let $p$ be a prime, $d$ a positive integer, and $m \ge d(p-1)+1$.  Then for any continuous maps $f:S^{m} \to \mathbb{R}^d$ there exist $p$ points $x_1, \dots, x_p\in S^m$ on a great circle of $S^m$ such that their pairwise distance on the sphere is greater than or equal to $2\pi/p$ and $f(x_1) = \dots = f(x_p)$.
\end{theorem}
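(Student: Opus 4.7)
The plan is to run a configuration space/test map scheme analogous to the proof of \cref{thm:cross-polytope-bound}, but taking the Stiefel manifold of orthonormal $2$-frames as the configuration space. Let $V_2(\rr^{m+1})$ denote this Stiefel manifold, whose elements are orthonormal pairs $(e_1,e_2)$ in $\rr^{m+1}$. Each such pair determines an oriented great circle in $S^m$, namely the unit circle in $\mathrm{span}(e_1,e_2)$, together with a regular $p$-gon inscribed on it whose vertices are
\[
x_k(e_1,e_2) = \cos(2\pi k/p)\,e_1 + \sin(2\pi k/p)\,e_2, \qquad k=0,1,\ldots,p-1,
\]
with consecutive spherical distance exactly $2\pi/p$. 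Since all $p$ points lie equally spaced on a great circle, every pair of them is at distance at least $2\pi/p$, so any $p$-tuple of this form witnesses the theorem as soon as $f(x_0)=\cdots=f(x_{p-1})$.

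Define the test map $F \colon V_2(\rr^{m+1}) \to (\rr^d)^p$ by $F(e_1,e_2) = (f(x_0),\ldots,f(x_{p-1}))$, and compose with the orthogonal projection onto the sum-zero subspace $W = \{(y_0,\ldots,y_{p-1}) : \sum_i y_i = 0\} \cong \rr^{d(p-1)}$ to obtain a map $\tilde F \colon V_2(\rr^{m+1}) \to W$. A zero of $\tilde F$ is precisely a configuration of the form above with common image under $f$, so the goal is to show $\tilde F$ must vanish somewhere. Both $V_2(\rr^{m+1})$ and $W$ carry natural $\zz_p$ actions: the generator rotates the frame $(e_1,e_2)$ by $2\pi/p$, which cyclically shifts the associated $p$-tuple of points; on $W$ it acts by cyclic permutation of the $\rr^d$-factors. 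The map $\tilde F$ is $\zz_p$-equivariant, and both actions are free. For the Stiefel manifold, a fixed frame under the $k$-th power of the generator would require $\cos(2\pi k/p) = 1$, impossible for $1\le k\le p-1$. For $S(W) = S^{d(p-1)-1}$, primality of $p$ implies that the only fixed vectors of any nontrivial cyclic shift lie on the diagonal of $(\rr^d)^p$, which meets $W$ only at the origin.

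Suppose for contradiction that $\tilde F$ has no zero. Normalizing produces a $\zz_p$-equivariant map $V_2(\rr^{m+1}) \to S^{d(p-1)-1}$. The projection $(e_1,e_2)\mapsto e_1$ realizes $V_2(\rr^{m+1})$ as the total space of a fiber bundle $S^{m-1} \to V_2(\rr^{m+1}) \to S^m$ (the fiber over $e_1$ consists of unit vectors orthogonal to $e_1$), and the long exact sequence of homotopy groups shows that $V_2(\rr^{m+1})$ is $(m-2)$-connected. The hypothesis $m \ge d(p-1)+1$ gives $m-2 \ge d(p-1)-1$, so Dold's theorem (\cref{thm:dold}) forbids such an equivariant map, yielding the desired contradiction. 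The key technical inputs are the connectivity estimate and the freeness of both actions; once those are in place the scheme is routine. The extra dimension compared with the Borsuk--Ulam theorem for $p=2$ comes exactly from replacing the antipodal sphere $S^m$ by the Stiefel manifold $V_2(\rr^{m+1})$, which is one degree less connected than $S^m$.
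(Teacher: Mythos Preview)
Your argument is correct and essentially identical to the paper's: both use the Stiefel manifold $V_2(\rr^{m+1})$ with the $\zz_p$-action rotating frames by $2\pi/p$, the test map into the sum-zero subspace of $(\rr^d)^p$, and Dold's theorem together with the $(m-2)$-connectedness of $V_2(\rr^{m+1})$. If anything, you are slightly more explicit in checking freeness of the two actions and in deriving the connectivity from the fiber bundle $S^{m-1}\to V_2(\rr^{m+1})\to S^m$.
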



\begin{proof}
    Consider the Stiefel manifold $V_{m+1,2}$ of orthonormal $2$-frames in $\rr^{m+1}$:
    \[
    V_{m+1,2} = \{(x,y) : x,y \in S^m,\ \langle x, y \rangle = 0\}
    \]
    Note that $V_{m+1,2}$ has a free action of $\zz_p$ defined as follows.  For a generator $g$ of $\zz_p$ and $(x,y) \in V_{m+1,2}$, we define
    \[
    g(x,y) = \left(\cos\left(\frac{2\pi}{p}\right)x + \sin\left(\frac{2\pi}{p}\right) y , -\sin\left(\frac{2\pi}{p}\right)x + \cos\left(\frac{2\pi}{p}\right) y \right).
    \]
    In other words, we rotate the pair $(x,y)$ an angle of $2\pi/p$ in their two-dimensional span.

    For a pair $(x,y)$ let $x_1, \dots, x_p$ be the first vectors of each of the pairs $(x,y)$, $g(x,y)$, $g^2(x,y)$,$\dots$, $g^{p-1}(x,y)$.

    Let $W^{p-1} = \{(y_1,\dots, y_p) \in \rr^p: y_1 + \dots + y_p=0\}\subset \rr^p$.  This space has an action of $\zz_p$ by rotating the entries that is free everywhere except at zero (which is a fixed point of the group).  
    We denote by $\pi$ the orthogonal projection of $\rr^{pd}\cong (\rr^p)^d$ onto $(W^{p-1})^d$.

    Consider the function
    \begin{align*}
        h:V_{m+1,2} & \to \rr^{pd} \\
        (x,y) & \mapsto (f(x_1),f(x_2),\dots,f(x_p))
    \end{align*}

    The function $h$ is continuous and $\zz_p$-equivariant.  Note that $f(x_1) = \dots = f(x_p)$ if an only if $\pi \circ h (x,y)$ is zero.  Moreover, by construction the pairwise spherical distance of $x_1, x_2, \dots, x_p$ is at least $2\pi/p$, and $x_1, x_2, \dots, x_p$ lie on a great circle of $S^m$.

    If $\pi \circ h$ has no zeros, we can project the image from $\pi \circ h$ to the unit sphere $S((W^{p-1})^d)$ of this space.  This would give us a $\zz_p$-equivariant continuous map $F: V_{m+1,2} \to S((W^{p-1})^d)$.  The homotopical connectedness of $V_{m+1,2}$ is $m-2$, and the dimension of $S(W^{p-1})^d$ is $d(p-1)-1$.  Since $m-2 \ge d(p-1)-1$, this  contradicts Dold's theorem. 
\end{proof}

\section{ $(d+1)$-neighborly polytopes and cyclic polytopes}\label{sec:neighborly}
In this section we show that for cyclic polytopes with at least $(r-1)(d+1)+1$ vertices, the dimension $m$ in Theorem \ref{gentverberg} can be improved to $2(d+1)$ when the map $f$ is linear.  

\begin{proposition}
    Let  $d,r$ be positive integers with $r\ge 3$, and let $P$ be a $(d+1)$-neighborly polytope with at least $(r-1)(d+1)+1$ vertices.  Then for any linear function $f:P \to \R^d$ there exist $r$ pairwise disjoint faces whose images under $f$ intersect. 
\end{proposition}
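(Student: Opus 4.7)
The plan is to reduce this to a direct application of the classical Tverberg theorem on the images of a well-chosen set of vertices of $P$, combined with Carathéodory's theorem and the $(d+1)$-neighborliness hypothesis. Since $P$ has at least $N := (r-1)(d+1)+1$ vertices, I would first pick such a set $v_1, \ldots, v_N$ and apply Tverberg's theorem directly to their images $f(v_1), \ldots, f(v_N) \in \R^d$. This yields a partition $\{1,\ldots,N\} = I_1 \sqcup \cdots \sqcup I_r$ together with a common point $q \in \bigcap_{j=1}^r \conv\{f(v_i) : i \in I_j\}$.

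The only issue is that $(d+1)$-neighborliness of $P$ guarantees that subsets of at most $d+1$ vertices span faces of $P$, while the blocks of an arbitrary Tverberg partition could in principle be larger. To fix this, I would invoke Carathéodory's theorem on each block: since every convex hull in question lives in $\R^d$, there is a subset $I_j' \subseteq I_j$ with $|I_j'| \le d+1$ such that $q \in \conv\{f(v_i) : i \in I_j'\}$. The trimmed sets $I_1', \ldots, I_r'$ remain pairwise disjoint because trimming only removes elements. By $(d+1)$-neighborliness, each vertex set $\{v_i : i \in I_j'\}$ spans a face $F_j$ of $P$, and by linearity of $f$ one has $f(F_j) = \conv\{f(v_i) : i \in I_j'\} \ni q$. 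The faces $F_1, \ldots, F_r$ are then pairwise vertex-disjoint, and their images share the point $q$, as desired.

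I do not anticipate any real obstacle in this argument: the key conceptual observation is that Carathéodory exactly bridges the gap between an arbitrary Tverberg partition and the faces that $(d+1)$-neighborliness makes available, and disjointness is automatically preserved by passing to subsets. The hypothesis $r \ge 3$ does not appear to be essential for the strategy above (the case $r=2$ would reduce to Radon's theorem in the same way), so I would guess it is included either to match the phrasing of the section's opening statement about cyclic polytopes or to exclude a trivial case.
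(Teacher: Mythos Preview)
Your proposal is correct and follows essentially the same route as the paper's own proof: apply Tverberg's theorem to the images of the vertices, trim each block to size at most $d+1$ via Carath\'eodory, and then use $(d+1)$-neighborliness together with the linearity of $f$ to realize each trimmed block as the image of a face of $P$. Your remark that the hypothesis $r\ge 3$ plays no role in the argument itself is also accurate; it is there only because for $r\le 2$ the resulting statement is no stronger than \cref{gentverberg}.
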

\begin{proof}
    By Tverberg's theorem, the images of the vertices in $\R^d$ admit a partition into $r$ disjoint sets, $S_1,\dots,S_r$, whose convex hulls intersect at some point $z\in \R^d$. By Carath\'eodory's theorem, for each $i\in [r]$ there exists a subset $F_i \subset S_i$ of size at most $d+1$ containing $z$. Since $P$ is $(d+1)$-neighborly, $f^{-1}(F_i)$ is a face of $P$ containing a point $x_i$ so that $f(x_i)=z$, as needed. 
\end{proof}

\begin{corollary}
      Let  $d,r$ be positive integers with $r\ge 3$, and let $P$ be a cyclic polytope of dimension at least $2(d+1)$  with at least $(r-1)(d+1)+1$ vertices.  Then for any linear function $f:P \to \R^d$ there exist $r$ pairwise disjoint faces whose images under $f$ intersect. 
\end{corollary}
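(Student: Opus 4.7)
The plan is to deduce this corollary immediately from the preceding proposition by observing that any cyclic polytope of dimension at least $2(d+1)$ is $(d+1)$-neighborly. Recall that a cyclic polytope of dimension $n$ is the convex hull of any finite set of points on the moment curve $t \mapsto (t, t^2, \ldots, t^n)$ in $\R^n$. A standard Vandermonde determinant computation (equivalently, Gale's evenness condition) shows that every subset of at most $\lfloor n/2 \rfloor$ vertices of a cyclic $n$-polytope is the vertex set of a face. Hence, when $n \ge 2(d+1)$, every $(d+1)$-subset of vertices forms a face, which is exactly $(d+1)$-neighborliness.

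With this observation in hand, the corollary follows directly: the cyclic polytope $P$ in the hypothesis has dimension at least $2(d+1)$, so it is $(d+1)$-neighborly, and it has at least $(r-1)(d+1)+1$ vertices. Both hypotheses of the previous proposition are therefore satisfied, and applying it to the linear map $f: P \to \R^d$ produces the desired $r$ pairwise vertex-disjoint faces whose images under $f$ share a common point. There is no genuine obstacle here; the statement is packaged as a corollary precisely because the topological and Tverberg-theoretic work has been carried out in the proposition, and the only input needed from the special structure of cyclic polytopes is the classical neighborliness property.
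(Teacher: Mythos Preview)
Your proof is correct and follows exactly the same route as the paper: observe that a cyclic polytope of dimension at least $2(d+1)$ is $(d+1)$-neighborly (the paper cites Gale for this, while you sketch the Vandermonde/evenness justification), and then invoke the preceding proposition. There is nothing to add.
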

\begin{proof}
      If $P$ be a cyclic polytope of dimension $\dim(P) \ge 2(d+1)$ then $P$ is $(d+1)$-neighborly (see \cite{Gale:1958}). 
\end{proof}

\section{The case $d=1$}\label{sec:dimone}
When $d=1$ Theorem  \ref{gentverberg} is true with continuous function $f$ for every (not necessarily prime power) $r$, because the image of a face $F$ in $\R$ under $f$ contains the convex hull of the image of the vertices of $F$. 
Here we show that when the 1-skeleton of $P$ is triangle-free, the dimension in Theorem \ref{gentverberg} can be improved to $r$, which is clearly best possible.

     \begin{theorem}
     If $P$ is a polytope of dimension at least $r$ such that its 1-skeleton is triangle-free. Then for any continuous function $f: P \to \R$ there exist $r$ pairwise disjoint faces of $P$ whose images under $f$ intersect. In particular, this holds for the $r$-hypercube.
\end{theorem}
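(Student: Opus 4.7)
The plan is to find the $r$ disjoint faces among the edges of $P$, reducing the problem to a matching question on the $1$-skeleton $G$ of $P$. Two classical facts about an $m$-polytope (with $m=\dim P\ge r$) drive the argument: Balinski's theorem, giving $m$-connectivity of $G$, and the fact that every vertex of $G$ has degree at least $m$ (its vertex figure is an $(m-1)$-polytope and hence has at least $m$ vertices). Combined with the triangle-free hypothesis these imply $|V(G)|\ge 2m$: for any edge $uv$ of $G$, triangle-freeness gives $N(u)\cap N(v)=\emptyset$, so $|V(G)|\ge |N(u)|+|N(v)|\ge 2m$.

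Having established this, I sort the vertices as $v_1,\ldots,v_n$ with $f(v_1)\le\cdots\le f(v_n)$, set $V^-=\{v_1,\ldots,v_r\}$ and $V^+=\{v_{r+1},\ldots,v_n\}$, and pick any threshold $c\in[f(v_r),f(v_{r+1})]$. Since $n\ge 2m\ge 2r$, both $|V^-|$ and $|V^+|$ are at least $r$. For any edge $e$ of $P$ with one endpoint in $V^-$ and one in $V^+$, the image $f(e)\subseteq\R$ is a closed interval containing both endpoint values and therefore contains $c$. So it is enough to produce $r$ pairwise vertex-disjoint edges of $G$ crossing between $V^-$ and $V^+$.

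For this last step I invoke K\"onig's theorem applied to the bipartite graph $G[V^-,V^+]$ of crossing edges: it suffices to check that every vertex cover $S$ of $G[V^-,V^+]$ has $|S|\ge r$. Assume $|S|<r\le m$; then $V^-\setminus S$ and $V^+\setminus S$ are both nonempty, and $G-S$ is still connected by Balinski's theorem. However, any path in $G-S$ from $V^-\setminus S$ to $V^+\setminus S$ must use some edge of $G[V^-,V^+]$ with both endpoints outside $S$, contradicting the vertex-cover property of $S$. The resulting matching $e_1,\ldots,e_r$ consists of pairwise disjoint edges (hence faces) of $P$ with $c\in\bigcap_{i=1}^r f(e_i)$.

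I do not foresee a serious obstacle: the only step where the triangle-free hypothesis is actually used is the $|V(G)|\ge 2m$ bound, and this is precisely what allows the partition $V^-\sqcup V^+$ to have both parts of size at least $r$. Without triangle-freeness the simplex on $m+1$ vertices already blocks the argument, consistent with the $2(r-1)$ lower bound of the general Tverberg theorem in dimension one. For the $r$-hypercube, the $1$-skeleton is bipartite (so triangle-free) and $r$-dimensional, so the hypotheses apply directly.
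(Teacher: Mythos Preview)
Your argument is correct, and it differs from the paper's in a pleasant way. Both proofs order the vertices by their $f$-values and invoke Balinski's theorem, but they diverge from there. The paper matches $v_1,\dots,v_{r-1}$ greedily to vertices outside $\{v_1,\dots,v_r\}$, using triangle-freeness at each step to rule out the obstruction that would force a repeated choice; the resulting $r$ faces are $r-1$ edges together with the single vertex $v_r$. You instead use triangle-freeness only once, to obtain $|V(G)|\ge 2m\ge 2r$ and hence $|V^+|\ge r$, and then appeal to K\"onig's theorem: any vertex cover of $G[V^-,V^+]$ of size $<r\le m$ would, by Balinski, leave $G$ connected with vertices remaining on both sides, forcing an uncovered crossing edge. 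This yields a matching of size exactly $r$, so all $r$ faces in your output are edges. Your route is a bit more structural and gives a marginally stronger conclusion; the paper's route avoids K\"onig and stays entirely elementary. Either way the triangle-free hypothesis is sharp for the same reason you note: without it the simplex shows the bound must revert to $2(r-1)$.
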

\begin{proof}
   Let $P$ be a $m$-polytope with $m\ge r$ such that its 1-skeleton $G$ is triangle-free. Let $G$ the 1-skeleton of $P$. By Balinski's theorem \cite{Balinski1961} $G$ is $m$-connected, and therefore $\delta(G) \ge m \ge r$, where $\delta(G)$ is the minimum degree of $G$.  
 Let $f:P \to \R$ be a continuous function and suppose that $x_1,x_2, \dots $ are the images of the vertices of $P$ ordered from left to right on $\R$, where the order of equal points is chosen arbitrarily.   Let $v_1,v_2,\dots$ be the vertices of $P$ so that $f(v_i)=x_i$ for all $i$.  

The degree of $v_1$ in  $G$ is at least $r$ and  thus there exists  $j_1 \notin \{1,2,\dots, r\}$ such that $v_1v_{j_1}$ is an edge of $P$.  Let $e_1=v_1v_{j_1}$.

Consider $v_2$. If there exists 
$j_2 \notin \{1,2,\dots, r,j_1\}$ such that $v_2v_{j_2}$ is an edge of $P$, let $e_2= v_2v_{j_2}$.
Otherwise, $\deg_G(v_2)=r$ and the neighborhood of $v_2$ in $G$ is $N_G(v_2)= \{v_1,v_3,\dots, v_r,v_{j_1}\}.$
But this implies that $v_1,v_2,v_{j_1}$ is a triangle in $G$, contradicting the fact that $G$ is triangle-free.

Now let $3\le i\le r-1$. If there exists 
$j_i \notin \{1,2,\dots, r,j_1,j_2 \dots, j_{i-1}\}$ such that $v_{i}v_{j_{i}}$ is an edge of $P$, let $e_{i}= v_{i}v_{j_{i}}$.
Otherwise, $$N_G(v_i) \subseteq \{v_1,\dots,v_{i-1},v_{i+1},\dots, v_r,j_1, \dots, j_{i-1}\}. $$
Since $\deg_G(v_i) \ge r$, by pigeonhole principle there exists $t\in [i-1]$ such that  $v_t,v_{j_t} \in N_G(v_i)$. But this implies that  $v_i,v_t,v_{j_t}$ is a triangle in $G$, a contradiction.
Therefore,  for each $i\in [r-1]$ we can find $j_i \notin \{1,2,\dots, r,j_1,j_2 \dots, j_{i-1}\}$ such that $e_i=v_iv_{j_i}$ is an edge of $P$. 

Now, by construction we have $x_r \in \bigcap_{i=1}^{r-1} f(e_i)$.
We conclude that $e_1,\dots, e_{r-1}, v_r$ are $r$ pairwise disjoint faces of $P$ whose images under $f$ intersect.  
\end{proof}

\section{Acknowledgment} 
The authors are grateful to Pavle Blagojevi\'c, Daniel McGinnis, Eran Nevo, and Martin Tancer for useful discussions.

\begin{bibdiv}
\begin{biblist}

\bib{Avvakumov2023}{article}{
      author={Avvakumov, Sergey},
      author={Karasev, Roman},
      author={Skopenkov, Arkadiy},
       title={Stronger counterexamples to the topological {T}verberg
  conjecture},
        date={2023},
        ISSN={0209-9683,1439-6912},
     journal={Combinatorica},
      volume={43},
      number={4},
       pages={717\ndash 727},
         url={https://doi.org/10.1007/s00493-023-00031-w},
}

\bib{Balinski1961}{article}{
      author={Balinski, Michel~Louis},
       title={On the graph structure of convex polyhedra in {$n$}-space},
        date={1961},
        ISSN={0030-8730,1945-5844},
     journal={Pacific J. Math.},
      volume={11},
       pages={431\ndash 434},
         url={http://projecteuclid.org/euclid.pjm/1103037323},
      review={\MR{126765}},
}

\bib{Barany2022}{article}{
      author={B\'{a}r\'{a}ny, Imre},
      author={Kalai, Gil},
       title={Helly-type problems},
        date={2022},
        ISSN={0273-0979,1088-9485},
     journal={Bull. Amer. Math. Soc. (N.S.)},
      volume={59},
      number={4},
       pages={471\ndash 502},
         url={https://doi.org/10.1090/bull/1753},
}

\bib{Blagojevic:2011vh}{article}{
      author={Blagojevi{\'c}, Pavle V.~M.},
      author={Matschke, Benjamin},
      author={Ziegler, G{\"u}nter~M.},
       title={{Optimal bounds for a colorful Tverberg-Vre\'cica type problem}},
        date={2011},
     journal={Advances in Mathematics},
      volume={226},
      number={6},
       pages={5198\ndash 5215},
}

\bib{Blagojevic:2015wya}{article}{
      author={Blagojevi{\'c}, Pavle V.~M.},
      author={Matschke, Benjamin},
      author={Ziegler, G{\"u}nter~M.},
       title={{Optimal bounds for the colored Tverberg problem}},
        date={2015},
        ISSN={1435-9855},
     journal={Journal of the European Mathematical Society},
      volume={17},
      number={4},
       pages={739\ndash 754},
}

\bib{Barany:2018fy}{article}{
      author={B{\'a}r{\'a}ny, Imre},
      author={Sober\'on, Pablo},
       title={{Tverberg's theorem is 50 years old: A survey}},
        date={2018},
     journal={Bulletin of the American Mathematical Society},
      volume={55},
      number={4},
       pages={459\ndash 492},
}

\bib{Blagojevic:2017bl}{incollection}{
      author={Blagojevi\'{c}, Pavle V.~M.},
      author={Ziegler, G{\"u}nter~M.},
       title={{Beyond the Borsuk--Ulam Theorem: The Topological Tverberg
  Story}},
        date={2017},
   booktitle={A journey through discrete mathematics},
      volume={34},
   publisher={Springer, Cham},
       pages={273\ndash 341},
}

\bib{Dold:1983wr}{article}{
      author={Dold, Albrecht},
       title={{Simple proofs of some Borsuk-Ulam results}},
        date={1983},
     journal={Contemp. Math},
      volume={19},
       pages={65\ndash 69},
}

\bib{Frick:2015wp}{article}{
      author={Frick, Florian},
       title={{Counterexamples to the topological Tverberg conjecture}},
        date={2015},
     journal={Oberwolfach Reports},
      volume={12},
      number={1},
       pages={318\ndash 312},
      eprint={1502.00947},
         url={arXiv},
}

\bib{Gale:1958}{article}{
      author={Gale, D.},
       title={Neighboring vertices on a convex polytope},
        date={1958},
     journal={Linear Equalities and Related Systems, Princeton Uni. Press},
       pages={255 \ndash  268},
}

\bib{grunbaum:book}{book}{
      author={Gr\"unbaum, Branko},
       title={Convex polytopes},
     edition={Second},
      series={Graduate Texts in Mathematics},
   publisher={Springer-Verlag, New York},
        date={2003},
      volume={221},
        ISBN={0-387-00424-6; 0-387-40409-0},
         url={https://doi.org/10.1007/978-1-4613-0019-9},
        note={Prepared and with a preface by Volker Kaibel, Victor Klee and
  G\"unter M.\ Ziegler},
      review={\MR{1976856}},
}

\bib{gruberetal}{incollection}{
      author={Gruber, P.~M.},
      author={Schneider, R.},
       title={Problems in geometric convexity},
        date={1979},
   booktitle={Contributions to geometry ({P}roc. {G}eom. {S}ympos., {S}iegen,
  1978)},
   publisher={Birkh\"auser Verlag, Basel-Boston, Mass.},
       pages={255\ndash 278},
      review={\MR{568503}},
}

\bib{Hasuietal}{article}{
      author={Hasui, Sho},
      author={Kishimoto, Daisuke},
      author={Takeda, Masahiro},
      author={Tsutaya, Mitsunobu},
       title={Tverberg's theorem for cell complexes},
        date={2023},
        ISSN={0024-6093,1469-2120},
     journal={Bull. Lond. Math. Soc.},
      volume={55},
      number={4},
       pages={1944\ndash 1956},
         url={https://doi.org/10.1112/blms.12829},
      review={\MR{4623695}},
}

\bib{DeLoera:2019jb}{article}{
      author={Loera, Jesús A.~De},
      author={Goaoc, Xavier},
      author={Meunier, Frédéric},
      author={Mustafa, Nabil~H.},
       title={{The discrete yet ubiquitous theorems of Carathéodory, Helly,
  Sperner, Tucker, and Tverberg}},
        date={2019},
     journal={Bulletin of the American Mathematical Society},
      volume={56},
      number={3},
       pages={1\ndash 97},
  url={https://www.ams.org/journals/bull/0000-000-00/S0273-0979-2019-01653-X/},
}

\bib{Lou-Yao:1992}{article}{
      author={Lou, Shi~Tuo},
      author={Yao, Qi},
       title={A {C}hebychev's type of prime number theorem in a short interval.
  {II}},
        date={1992},
        ISSN={2804-7370},
     journal={Hardy-Ramanujan J.},
      volume={15},
       pages={1\ndash 33},
      review={\MR{1215589}},
}

\bib{Mabillard.2015}{article}{
      author={Mabillard, Isaac},
      author={Wagner, Uli},
       title={{Eliminating Higher-Multiplicity Intersections, I. A Whitney
  Trick for Tverberg-Type Problems}},
        date={2015-08},
     journal={arXiv},
      volume={math.GT},
      eprint={1508.02349},
         url={arXiv},
        note={46 pages, 14 figures},
}

\bib{Tverberg:1966tb}{article}{
      author={Tverberg, Helge},
       title={{A generalization of Radon's theorem}},
        date={1966},
     journal={J. London Math. Soc},
      volume={41},
      number={1},
       pages={123\ndash 128},
}

\bib{Volovikov:1996up}{article}{
      author={Volovikov, Alexey~Yu.},
       title={{On a topological generalization of the Tverberg theorem}},
        date={1996},
        ISSN={0001-4346},
     journal={Mathematical Notes},
      volume={59},
      number={3},
       pages={324\ndash 326},
}

\bib{Vucic1993be}{article}{
      author={Vu\v{c}i\'{c}, Aleksandar},
      author={\v{Z}ivaljevi\'{c}, Rade~T.},
       title={{Note on a conjecture of Sierksma}},
        date={1993},
     journal={{Discrete \& Computational Geometry}},
      volume={9},
      number={1},
       pages={339\ndash 349},
}

\bib{Yang:1957}{article}{
      author={Yang, Chung-Tao},
       title={On maps from spheres to euclidean spaces},
        date={1957},
     journal={American Journal of Mathematics},
      number={79 (4)},
       pages={725 \ndash  732},
}

\end{biblist}
\end{bibdiv}

\end{document}